\documentclass[11pt]{amsart}
\usepackage{amsmath,amssymb,amsthm,amscd,verbatim}
\bibliographystyle{plain}
\usepackage{graphicx}
\usepackage{epsfig}
\usepackage{hyperref}

\setlength{\textwidth}{6.5in}     
\setlength{\oddsidemargin}{0in}   
\setlength{\evensidemargin}{0in}  
\setlength{\textheight}{8.5in}    
\setlength{\topmargin}{0in}       
\setlength{\headheight}{0in}      
\setlength{\headsep}{.3in}         
\setlength{\footskip}{.5in}       
\vfuzz2pt



\newtheorem{thm}{Theorem}
\newtheorem{lem}[thm]{Lemma}

\newtheorem{prop}[thm]{Proposition}
\newtheorem{cor}[thm]{Corollary}
\newtheorem*{thm*}{Theorem}

\theoremstyle{definition}

\theoremstyle{remark}
\newtheorem{remark}[thm]{Remark}

\newtheorem*{acknowledgement}{Acknowledgments}

 \newcommand{\cP}{\mathcal{P}}

\newcommand{\NOS}{\mathbb{N}}
\newcommand{\OS}{\mathbb{S}}

\newcommand{\bo}{\ensuremath{\mathrm{box}}}

\renewcommand{\le}{\leqslant}
\renewcommand{\leq}{\leqslant}
\renewcommand{\ge}{\geqslant}
\renewcommand{\geq}{\geqslant}


\begin{document}
\title{Boxicity and topological invariants}
\author{Louis Esperet} \address{Laboratoire G-SCOP (CNRS,
  Grenoble-INP), Grenoble, France}
\email{louis.esperet@g-scop.fr}

\thanks{Louis Esperet is partially supported by ANR Project STINT
  (\textsc{anr-13-bs02-0007}), and LabEx PERSYVAL-Lab
  (\textsc{anr-11-labx-0025}).}

\date{}
\sloppy

\begin{abstract}
  The boxicity of a graph $G=(V,E)$ is the smallest integer $k$ for
  which there exist $k$ interval graphs $G_i=(V,E_i)$, $1 \le i \le
  k$, such that $E=E_1 \cap \cdots \cap E_k$. In the first part of
  this note, we prove that every
  graph on $m$ edges has boxicity $O(\sqrt{m \log m})$, which is
  asymptotically best possible. We use this result to study the connection between
  the boxicity of graphs and their Colin de Verdi\`ere invariant,
  which share many similarities. Known results concerning the two
  parameters suggest that for any graph $G$, the boxicity of
  $G$ is at most the Colin de Verdi\`ere invariant of $G$, denoted by $\mu(G)$. We
  observe that every graph $G$ has boxicity $O(\mu(G)^4(\log \mu(G))^2)$,
  while there are graphs $G$ with boxicity $\Omega(\mu(G)\sqrt{\log
    \mu(G)})$. In the second part of this note, we focus on graphs
  embeddable on a surface of Euler genus $g$. We prove that these
  graphs have boxicity $O(\sqrt{g}\log g)$, while some of these graphs
  have boxicity $\Omega(\sqrt{g \log g})$. This improves the
  previously best known upper and lower bounds. These results directly
  imply a nearly optimal bound on the dimension of the
  adjacency poset of graphs on surfaces.
\end{abstract}
\maketitle

\section{Introduction}

Given a collection ${\mathcal C}$ of subsets of a set $\Omega$, the
\emph{intersection graph} of ${\mathcal C}$ is defined as the
graph with vertex set ${\mathcal C}$, in which two elements of
${\mathcal C}$ are adjacent if and only if their intersection is non empty.
A \emph{$d$-box} is the Cartesian product $[x_1,y_1] \times \ldots
\times [x_d,y_d]$ of $d$ closed intervals of the real line.  The
\emph{boxicity} $\bo(G)$ of a graph $G$, introduced by
Roberts~\cite{Rob69} in 1969, is the smallest integer $d
\geq 1$ such that $G$ is the intersection graph of a collection of
$d$-boxes. The \emph{intersection} $G_1 \cap \cdots \cap G_k$ of $k$ graphs
$G_1,\ldots ,G_k$ defined on the same vertex set $V$, is the graph
$(V,E_1 \cap \ldots \cap E_k)$, where $E_{i}$ ($1 \le i \le k$)
denotes the edge set of $G_{i}$.  Observe that the boxicity of a graph
$G$ can equivalently be defined as the smallest $k$ such that $G$ is
the intersection of $k$ interval graphs.

\smallskip

In the first part of this note, we prove that every graph on $m$
edges has boxicity $O(\sqrt{m \log m})$, and that there are examples
showing that this bound is asymptotically best possible.

\smallskip

A minor-monotone graph invariant, usually denoted by $\mu(\cdot)$, was
introduced by Colin de Verdi\`ere in 1990~\cite{Col90}. It relates to
the maximal multiplicity of the second largest eigenvalue of the
adjacency matrix of a graph, in which the diagonal entries can take
any value and the entries corresponding to edges can take any
positive values (a technical assumption, called the Strong Arnold
Property, has to be added to avoid degenerate cases, but we omit the
details as they are not necessary in our discussion).

\smallskip

It was proved by Colin de Verdi\`ere that $\mu(G)\le 1$ if and only if
$G$ is a linear forest, $\mu(G)\le 2$ if and only if $G$ is an
outerplanar graph, and $\mu(G)\le 3$ if and only if $G$ is a planar
graph. Scheinerman proved in 1984 that outerplanar graphs have boxicity at
most two \cite{Sch84} and Thomassen proved in 1986 that planar graphs
have boxicity at most three \cite{Tho86}. Since a linear forest is an
interval graph, these results prove that for any
planar graph $G$, $\bo(G)\le \mu(G)$.

\smallskip

These two graph invariants share several other similarities: every
graph $G$ of treewidth at most $k$ has $\bo(G)\le k+1$~\cite{CS07} and
$\mu(G)\le k+1$~\cite{GB11}. For any vertex $v$ of $G$, $\bo(G-v)\le
\bo(G)+1$ and if $G-v$ contains an edge, $\mu(G-v)\le \mu(G)+1$. Both
parameters are bounded for graphs $G$ with crossing number at most
$k$: $\bo(G)=O(k^{1/4}(\log k)^{3/4})$~\cite{ACM11} and $\mu(G)\le
k+3$~\cite{Col90}. It is known that every graph on $n$ vertices has
boxicity at most $n/2$, and equality holds only for complements of
perfect matchings~\cite{Rob69}. These graphs have Colin de Verdi\`ere
invariant at least $n-3$~\cite{KLV97}. On the other hand every graph
on $n$ vertices has Colin de Verdi\`ere invariant at most $n-1$, and
equality holds only for cliques (which have boxicity 1).

\smallskip

It is interesting to note that in each of the results above, the
known upper bound on the boxicity is better than the known upper bound
on the Colin de Verdi\`ere invariant. This suggests that for any graph
$G$, $\bo(G)\le \mu(G)$.

The following slightly weaker relationship between the boxicity and
the Colin de Verdi\`ere invariant is a direct consequence of the fact
that any graph $G$ excludes the clique on $\mu(G)+2$ vertices as a
minor, and graphs with no $K_t$-minor have boxicity $O(t^4 (\log
t)^2)$~\cite{EJ13}.

\begin{prop}\label{pro1}
There is a constant $c_0$ such that for any graph $G$, $\bo(G)\le c_0 \mu(G)^4(\log \mu(G))^2$.
\end{prop}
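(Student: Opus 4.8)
The plan is to combine the two facts recalled immediately before the statement: every graph $G$ excludes $K_{\mu(G)+2}$ as a minor, and $K_t$-minor-free graphs have boxicity $O(t^4(\log t)^2)$~\cite{EJ13}. The first fact holds because $\mu$ is minor-monotone and $\mu(K_t)=t-1$, so a $K_{\mu(G)+2}$-minor in $G$ would give $\mu(G)\ge\mu(K_{\mu(G)+2})=\mu(G)+1$, which is impossible.

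So $G$ is $K_t$-minor-free with $t:=\mu(G)+2$, and the theorem of~\cite{EJ13} yields an absolute constant $c_1$ with $\bo(G)\le c_1(\mu(G)+2)^4\bigl(\log(\mu(G)+2)\bigr)^2$. It then remains only to tidy this up. We may assume $\mu(G)\ge 2$ (for smaller $\mu(G)$ the graph $G$ lies in a fixed proper minor-closed class, so its boxicity is bounded by an absolute constant, which can be absorbed into $c_0$). Then $\mu(G)+2\le 2\mu(G)$ and $\log(\mu(G)+2)\le 2\log\mu(G)$, whence $\bo(G)\le 64\,c_1\,\mu(G)^4(\log\mu(G))^2$, which is the desired bound after setting $c_0$ large enough.

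Since all the substance is carried by the cited bound of~\cite{EJ13}, I do not expect any genuine obstacle; the only points needing a little care are the minor-monotonicity argument (which uses $\mu(K_t)=t-1$, the reason that adding $2$ to $\mu(G)$ really does forbid the clique minor) and the harmless bookkeeping with the constant $c_0$ and with $\log$ near small values of $\mu(G)$.
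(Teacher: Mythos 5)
Your proof is correct and follows exactly the route the paper indicates: $G$ excludes $K_{\mu(G)+2}$ as a minor (by minor-monotonicity of $\mu$ and $\mu(K_t)=t-1$), and the $O(t^4(\log t)^2)$ boxicity bound for $K_t$-minor-free graphs from~\cite{EJ13} does the rest. The paper gives only this one-line justification, so your added detail on the clique-minor exclusion and the constant bookkeeping is a faithful (indeed slightly more careful) version of the same argument.
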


It follows that the boxicity is bounded by a polynomial function of the Colin de Verdi\`ere invariant.

\medskip

Pendavingh~\cite{Pen98} proved that for any graph $G$ with $m$ edges,
$\mu(G)\le \sqrt{2m}$. Interestingly, there did not exist any
corresponding result for the boxicity and it was suggested by Andr\'as
Seb\H o that graphs $G$ with large boxicity (as a function of their
number of edges) might satisfy $\bo(G)>\mu(G)$. As we observe in the
next section, there are graphs on $m$ edges, with boxicity
$\Omega(\sqrt{m \log m})$. It follows that there are graphs $G$ with
boxicity $\Omega(\mu(G)\sqrt{\log \mu(G)})$. These graphs show that
the boxicity is not even bounded by a linear function of the Colin de
Verdi\`ere invariant.

\medskip

In the second part of this paper, we show that every graph embeddable
on a surface of Euler genus $g$ has boxicity $O(\sqrt{g}\log g)$,
while there are graphs embeddable on a surface of Euler genus $g$ with
boxicity $\Omega(\sqrt{g\log g})$. This improves the upper bound
$O(g)$ and the lower bound $\Omega(\sqrt{g})$ given in~\cite{EJ13}. (Incidentally,
graphs embeddable on a surface of Euler genus $g$ have Colin de
Verdi\`ere invariant $O(g)$ and it is conjectured that the right bound
should be $O(\sqrt{g})$~\cite{Col90,Sev02}.)

Our upper bound on the boxicity of
graphs on surfaces has a direct corollary on the dimension of the
adjacency poset of graphs on surfaces, introduced by Felsner and
Trotter~\cite{FT00}, and investigated in~\cite{FLT10} and~\cite{EJ13}.

\section{Boxicity and the number of edges}\label{sec:edges}

We will use the following two lemmas of Adiga, Chandran, and
Mathew~\cite{ACM11}. A graph $G$ is \emph{$k$-degenerate} if every
subgraph of $G$ contains a vertex of degree at most $k$. In what
follows, the logarithm is taken to be the natural logarithm (and its
base is denoted by $e$).

\begin{lem}{\cite{ACM11}}\label{lem:1}
Any $k$-degenerate graph on $n\ge 2$ vertices has
boxicity at most $(k+2)\lceil 2e \log n \rceil$.
\end{lem}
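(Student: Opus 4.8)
The plan is to use the standard reformulation of boxicity: $\bo(G)\le t$ if and only if there are interval graphs $H_1,\dots,H_t$, each a spanning supergraph of $G$, such that every non-edge of $G$ is a non-edge of at least one $H_i$. So it suffices to produce $t:=(k+2)\lceil 2e\log n\rceil$ interval supergraphs of $G$ that together \emph{separate} every non-edge, and the natural tool is the probabilistic method with a union bound over the fewer than $\binom n2<n^2$ non-edges.

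First I would fix a degeneracy ordering $v_1,\dots,v_n$, so each $v_i$ has at most $k$ neighbours among $v_1,\dots,v_{i-1}$, and orient every edge towards its smaller endpoint; this is an acyclic orientation $D$ with all out-degrees at most $k$. The point is that any assignment of real coordinates $(x_v)_{v\in V}$ yields, \emph{for free}, an interval supergraph of $G$: give each $v$ the smallest interval containing the coordinates of $v$ and of its out-neighbours. For an edge $u\to w$ the coordinate $x_w$ lies in both the interval of $u$ and that of $w$, so the edge is present; and each interval is the span of at most $k+1$ of the coordinates. Thus the only real question is how often a given non-edge gets separated by such a supergraph.

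The core of the argument, and the step I expect to be the main obstacle, is a probabilistic construction for which a fixed non-edge $\{u,v\}$ is separated with probability $\Omega(1/k)$. Taking the coordinates independently uniform and using spans of closed out-neighbourhoods is \emph{not} good enough on its own: two spans of $\Theta(k)$ i.i.d.\ uniform points are disjoint only with probability $2^{-\Theta(k)}$, and moreover such a span construction never separates two non-adjacent vertices with a common out-neighbour. So one must work with a bounded number ($k+2$) of families of interval supergraphs, each obtained from fresh random coordinates together with a family-specific way of charging each edge to one of its two endpoints, arranged so that for every non-edge $\{u,v\}$ at least one family collapses one endpoint's interval (say $u$'s) to essentially a point while keeping every interval of width $O(k)$. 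Then separation follows because a single uniform coordinate is the smallest or largest among $O(k)$ others with probability $\Omega(1/k)$. Exhibiting $O(k)$ such charging schemes that jointly handle \emph{all} non-edges — including the troublesome ones whose endpoints share many neighbours — while keeping every interval narrow, is the heart of the proof; this is where the factor $k+2$ comes from.

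Finally, running $\lceil 2e\log n\rceil$ independent trials in each of the $k+2$ families, the probability that a fixed non-edge is never separated is at most $\bigl(1-\tfrac1{e(k+2)}\bigr)^{(k+2)\lceil 2e\log n\rceil}<\exp(-2\log n)=n^{-2}$. A union bound over the fewer than $\binom n2$ non-edges then shows that with positive probability some choice of all $t=(k+2)\lceil 2e\log n\rceil$ interval supergraphs separates every non-edge; their intersection is exactly $G$, giving $\bo(G)\le t$.
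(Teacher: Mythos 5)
The paper itself does not prove this lemma --- it is quoted from Adiga, Chandran and Mathew \cite{ACM11} --- so your proposal has to stand on its own, and it does not. The first problem is that you explicitly leave the central construction unproved: exhibiting the ``$k+2$ charging schemes that jointly handle all non-edges'' is, as you say yourself, the heart of the proof, and listing the properties such a construction ought to have is not the same as producing one. Since you have (correctly) observed that the naive span-of-out-neighbourhoods supergraph fails --- separation probability $2^{-\Theta(k)}$, and zero for non-adjacent vertices with a common out-neighbour --- essentially the entire content of the lemma is concentrated in this missing step.

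The second problem is that even granting a construction with exactly the properties you describe, the final union bound is wrong. By your own design, for a \emph{fixed} non-edge $\{u,v\}$ only one of the $k+2$ families is guaranteed to collapse an endpoint and hence to separate $\{u,v\}$ with probability $\Omega(1/k)$ per trial; the other $k+1$ families may never separate it. So the failure probability for $\{u,v\}$ is not $\bigl(1-\tfrac1{e(k+2)}\bigr)^{(k+2)\lceil 2e\log n\rceil}$ but only $\bigl(1-\tfrac1{e(k+2)}\bigr)^{\lceil 2e\log n\rceil}\approx n^{-2/(k+2)}$, which does not survive a union bound over $\Theta(n^2)$ non-edges for any $k\ge 1$. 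An architecture of ``$O(\log n)$ trials per family, per-trial success $\Theta(1/k)$ for the one relevant family'' can at best be repaired to give $O(k^2\log n)$ interval graphs. To reach $(k+2)\lceil 2e\log n\rceil$ one needs a single distribution over interval supergraphs of $G$ under which \emph{every} non-edge is separated with probability at least $\tfrac1{e(k+2)}$, from which one draws $(k+2)\lceil 2e\log n\rceil$ independent samples; designing such a distribution for $k$-degenerate graphs is precisely the content of the argument in \cite{ACM11}, and it is the piece your proposal does not supply.
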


\begin{lem}{\cite{ACM11}}\label{lem:2}
Let $G$ be a graph, and let $S$ be a set of vertices of $G$. Let $H$
be the graph obtained from $G$ by removing all edges between pairs of
vertices of $S$. Then $\bo(G)\le 2 \,\bo(H)+\bo(G[S])$, where $G[S]$
stands for the subgraph of $G$ induced by $S$.
\end{lem}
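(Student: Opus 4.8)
The plan is to use the description of boxicity as the least number of interval graphs whose intersection is $G$. Write $b_H=\bo(H)$ and $b_S=\bo(G[S])$, fix interval graphs $J_1,\dots,J_{b_H}$ on $V$ with $H=J_1\cap\cdots\cap J_{b_H}$, and interval graphs $I_1,\dots,I_{b_S}$ on $S$ with $G[S]=I_1\cap\cdots\cap I_{b_S}$. The goal is to build $2b_H+b_S$ interval graphs on $V$ whose intersection is $G$. The point to keep in mind is that $E(G)$ is the disjoint union of $E(G[S])$, the edges inside $S$, and $E(H)$, the edges of $G$ with at least one endpoint outside $S$.

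First, handle the edges inside $S$: extend each $I_l$ to an interval graph $I_l^+$ on $V$ by taking an interval representation of $I_l$ and giving every vertex of $V\setminus S$ a single common interval containing all the others. Then in each $I_l^+$ every vertex of $V\setminus S$ is adjacent to everything, so $I_1^+\cap\cdots\cap I_{b_S}^+$ agrees with $G[S]$ on the pairs inside $S$ and contains every pair meeting $V\setminus S$.

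The heart of the argument is the next step: from each $J_i$ I produce two interval graphs $J_i'$ and $J_i''$. Fix an interval representation $v\mapsto[\ell_v,r_v]$ of $J_i$ and pick $M$ larger than the absolute value of every endpoint used by a vertex of $V\setminus S$. In $J_i'$, keep the intervals of the vertices of $V\setminus S$ and replace the interval of each $v\in S$ by $[\ell_v,M]$; in $J_i''$, symmetrically, replace the interval of each $v\in S$ by $[-M,r_v]$. Both are interval graphs, and checking the three cases (both endpoints in $V\setminus S$, one endpoint in $S$, both in $S$) shows that $J_i'\cap J_i''$ has exactly the edges of $J_i$ on every pair not contained in $S$ and is complete on $\binom S2$; that is, $E(J_i'\cap J_i'')=E(J_i)\cup\binom S2$.

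Finally, combine: since $\binom S2$ occurs in every $E(J_i'\cap J_i'')$, the intersection over $i$ of these edge sets equals $\big(\bigcap_i E(J_i)\big)\cup\binom S2=E(H)\cup\binom S2$. Intersecting further with $E(I_1^+\cap\cdots\cap I_{b_S}^+)$ keeps, among the pairs inside $S$, only $E(G[S])$ (as $E(H)$ has no edge inside $S$), and leaves $E(H)$ untouched on the pairs meeting $V\setminus S$ (as the $I_l^+$ are complete there); the outcome is $E(H)\cup E(G[S])=E(G)$. Thus $G$ is the intersection of the $2b_H$ interval graphs $J_i',J_i''$ and the $b_S$ interval graphs $I_l^+$, so $\bo(G)\le 2b_H+b_S$. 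The only thing left is a word on the degenerate cases where one family is empty ($H$ complete, $G[S]$ complete, or $|S|\le 1$), where the bound is immediate. I expect the two-sided truncation in the third step to be the one real idea here; everything else is bookkeeping with edge sets.
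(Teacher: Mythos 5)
Your proof is correct and is essentially the argument from~\cite{ACM11}, the source the paper cites for this lemma (the paper itself gives no proof): split $E(G)$ into $E(H)$ and $E(G[S])$, cover the latter by extensions $I_l^+$ in which $V\setminus S$ is universal, and cover the former by the two one-sided truncations $J_i',J_i''$, whose intersection realizes $E(J_i)\cup\binom{S}{2}$. One trivial fix: $M$ must exceed the absolute value of \emph{every} endpoint in the representation of $J_i$, not only those of vertices of $V\setminus S$, so that $[\ell_v,M]$ and $[-M,r_v]$ are genuinely nonempty intervals for each $v\in S$; with that, all three case checks go through as you state.
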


We now prove that every graph on $m$ vertices has boxicity
$O(\sqrt{m \log m})$. We make no real effort to optimize the
constants, and instead focus on simplifying the computation as much
as possible.

\begin{thm}\label{th:up}
For every graph $G$ on $n\ge 2$ vertices and $m$ edges, $\bo(G)\le (15e+1) \sqrt{m \log n}$.
\end{thm}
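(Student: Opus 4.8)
The plan is to split the vertex set into a "high-degree" part and a "low-degree" part, apply Lemma~\ref{lem:2} with $S$ the high-degree vertices, and then control each piece separately using degeneracy and the trivial $n/2$ bound. Concretely, fix a threshold $t$ (to be chosen of order $\sqrt{m/\log n}$), let $S=\{v\in V(G):\deg_G(v)\ge t\}$, and let $H$ be obtained from $G$ by deleting all edges inside $S$. Since $\sum_v \deg_G(v)=2m$, we have $|S|\le 2m/t$. The graph $G[S]$ has at most $|S|$ vertices, so by Roberts' bound $\bo(G[S])\le |S|/2\le m/t$.

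For $H$, the key observation is that $H$ is $(2t)$-degenerate, or something close to it: in any subgraph of $H$, a vertex not in $S$ already has degree at most $t$ in $G$ (hence in $H$), while in any subgraph on $r$ vertices the average degree is at most $2m'/r$ where $m'$ is the number of edges; since the edges of $H$ are precisely the edges of $G$ with at most one endpoint in $S$ deleted... actually the cleanest route is: every subgraph $H'$ of $H$ on $r\ge 1$ vertices has a vertex of degree at most $2m/r$ if $r$ is large, and if $r\le 2m/t$ we can instead bound the minimum degree crudely. I would argue that $H$ is $O(t)$-degenerate by noting that each edge of $H$ has an endpoint outside $S$ (edges inside $S$ were removed), so the edges of $H'$ can be charged to vertices outside $S$, each of which receives at most $t$ charges; hence $|E(H')|\le t\cdot r$ and the minimum degree is at most $2t$. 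Thus $H$ is $2t$-degenerate, and Lemma~\ref{lem:1} gives $\bo(H)\le (2t+2)\lceil 2e\log n\rceil$.

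Plugging into Lemma~\ref{lem:2}: $\bo(G)\le 2\bo(H)+\bo(G[S])\le 2(2t+2)\lceil 2e\log n\rceil + m/t$. Now choose $t$ to balance the two main terms, roughly $t\asymp \sqrt{m/\log n}$; a clean choice such as $t=\lceil\sqrt{m/\log n}\,\rceil$ should make $2(2t+2)\lceil 2e\log n\rceil = O(\sqrt{m\log n})$ and $m/t=O(\sqrt{m\log n})$. Tracking the constants through (using $\lceil 2e\log n\rceil\le 2e\log n + 1\le 3e\log n$ for $n\ge 2$, and $\log n\ge \log 2$), one arrives at a bound of the form $(15e+1)\sqrt{m\log n}$; I would not fight for the exact constant, just verify the arithmetic closes with the stated value.

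The main obstacle is the degeneracy estimate for $H$: one must be careful that removing the edges inside $S$ genuinely forces low degeneracy uniformly over all subgraphs, not just in $H$ itself. The charging argument (every edge of $H$ has an endpoint of $G$-degree $<t$, so in any subgraph the edge count is at most $t$ times the number of low-degree vertices present) handles this, but it requires that we only count each edge once and that the threshold $t$ was chosen with the factor of $2$ in mind so the resulting degeneracy bound $2t$ feeds cleanly into Lemma~\ref{lem:1}. Everything after that is routine optimization of the parameter $t$ and bookkeeping of constants.
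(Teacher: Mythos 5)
Your proposal follows essentially the same route as the paper's proof: split off a set $S$ of high-degree vertices, delete the edges inside $S$ to obtain a low-degeneracy graph $H$, bound $\bo(H)$ via Lemma~\ref{lem:1}, bound $\bo(G[S])$ by Roberts' $|S|/2$ bound after counting $|S|\le 2m/t$, and combine with Lemma~\ref{lem:2}. The only structural difference is how $S$ is chosen: you threshold on the global degree in $G$, whereas the paper peels iteratively (start with $S=V$ and repeatedly delete vertices having at most $\sqrt{m/\log n}$ neighbours in the current $S$), which yields both the degeneracy of $H$ and the minimum degree of $G[S]$ directly from the construction. Both choices work.

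The one place your argument does not close as written is the constant. Your charging argument only gives that $H$ is $2t$-degenerate, and feeding $2t$ into Lemma~\ref{lem:1} doubles the dominant term: with $t=\lceil\sqrt{m/\log n}\,\rceil$ and your own estimates ($(2t+2)\le 4t$, $\lceil 2e\log n\rceil\le 3e\log n$) you get $2\,\bo(H)\le 24e\,t\log n$, i.e.\ a final bound of at least $(24e+1)\sqrt{m\log n}$ rather than $(15e+1)\sqrt{m\log n}$. The factor of $2$ is spurious: $H$ is in fact $(t-1)$-degenerate, because any nonempty subgraph of $H$ either contains a vertex outside $S$, whose degree is already less than $t$ in $G$, or has all its vertices in $S$ and is therefore edgeless; no averaging is needed. (Alternatively one could rebalance $t$, but the degeneracy fix is cleaner and is what the paper's peeling gives automatically.) You should also reduce to connected $G$ at the outset, as the paper does, so that $m\ge n-1\ge\log n$; this guarantees $t\ge 1$ and $\log n\le\sqrt{m\log n}$, which are needed to absorb the additive slack from the ceilings and the ``$+2$'' into the stated constant.
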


\begin{proof}
Let $G=(V,E)$ be a graph on $n\ge 2$ vertices and $m$ edges. Since the
boxicity of a graph is the maximum boxicity of its connected
components, we can assume that $G$ is connected (in particular, $m\ge
n-1\ge \log n$). Let $S$ be a set of vertices of $G$ obtained as follows: start
with $S=V$ and as long as $S$ contains a vertex $v$ with at most
$\sqrt{m/\log n}$ neighbors in $S$, remove $v$ from $S$. Let $H$ be
the graph obtained from $G$ by removing all edges between pairs of
vertices of $S$.

\smallskip

The order in which the vertices were removed from $S$ shows that the
graph $H$ is $\sqrt{m/\log n}$-degenerate. By Lemma~\ref{lem:1}, for
$k\ge 1$, every
$k$-degenerate graph on $n\ge 2$ vertices has
boxicity at most $(k+2)\lceil 2e \log n \rceil\le \tfrac{15e}2 k \log
n$. It follows that $H$ has boxicity at most $\tfrac{15e}2\sqrt{m\log n}$.

By definition of $S$, every vertex of $S$ has degree more than
$\sqrt{m/\log n}$ in $G[S]$, so $G[S]$ has at least
$\frac{|S|}2\sqrt{m/\log n}$
edge. It follows that $|S|\le 2\sqrt{m \log n}$. Since any graph on
$N$ vertices has boxicity at most $N/2$~\cite{Rob69}, $\bo(G[S])\le
\sqrt{m \log n}$.

\smallskip

By Lemma~\ref{lem:2}, $\bo(G)\le 2 \,\bo(H)+\bo(G[S])$. It follows that
$G$ has boxicity at most $15e\sqrt{m\log n}+\sqrt{m \log
  n}=(15e+1)\sqrt{m \log n}$, as desired.
\end{proof}

\begin{remark}
As proved in~\cite{ACM11}, Lemmas~\ref{lem:1}
and~\ref{lem:2} also hold if the boxicity is replaced by the \emph{cubicity} (the smallest $k$
such that $G$ is the intersection of $k$ unit-interval graphs), so the
proof can easily be adapted to show that any graph $G$ with $m$ edges
has cubicity $O(\sqrt{m \log m})$.
\end{remark}

We now observe that Theorem~\ref{th:up} is asymptotically best
possible. Let $G_n$ be a bipartite graph with $n$ vertices in each
partite set, and such that every edge between the two partite sets is
selected uniformly at random with probability $p=1/\log n$. Using
Chernoff bound, it is easy to deduce that asymptotically almost surely
(i.e., with probability tending to 1 as $n$ tends to infinity) $G_n$
has at most $2n^2/\log n$ edges. Using a nice connection between the
dimension of a poset and the boxicity of its comparability
graph~\cite{ABC10}, Adiga, Bhowmick and Chandran deduced from a result
of Erd\H os, Kierstead and Trotter~\cite{EKT91} that there is a
constant $c_1>0$ such that asymptotically almost surely, $\bo(G_n)\ge
c_1n$ (see also~\cite{ACM11}). It follows that asymptotically almost
surely, $\bo(G_n)\ge c_1\sqrt{|E(G_n)| \log n/2}$, which shows the (asymptotic)
optimality of Theorem~\ref{th:up}.

\medskip

Recall that by~\cite{Pen98}, $\mu(G_n)\le
\sqrt{2|E(G_n)|}$. This implies the following counterpart of Proposition~\ref{pro1}.

\begin{prop}
For some constant $c_1'>0$, there are infinitely many graphs $G$ with
$\bo(G)\ge c_1' \mu(G)\sqrt{\log \mu(G)}$.
\end{prop}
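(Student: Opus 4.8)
The plan is to combine the lower bound on the boxicity of the random bipartite graphs $G_n$ (already established in the excerpt, namely $\bo(G_n)\ge c_1 n$ asymptotically almost surely) with Pendavingh's upper bound $\mu(G_n)\le\sqrt{2|E(G_n)|}$. The point is simply that for these graphs the number of edges is small — at most $2n^2/\log n$ a.a.s. — so the inequality $\mu\le\sqrt{2m}$ forces $\mu(G_n)$ to be substantially smaller than $n$, while $\bo(G_n)$ stays linear in $n$.

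Concretely, I would fix $n$ large enough that $G_n$ simultaneously satisfies $|E(G_n)|\le 2n^2/\log n$ and $\bo(G_n)\ge c_1 n$ (both hold a.a.s., hence jointly for all large $n$). Then $\mu(G_n)\le\sqrt{2|E(G_n)|}\le\sqrt{4n^2/\log n}=2n/\sqrt{\log n}$, so in particular $\mu(G_n)\le 2n$ and therefore $\log\mu(G_n)\le \log n + O(1)\le 2\log n$ for $n$ large. Combining, $\mu(G_n)\sqrt{\log\mu(G_n)}\le \tfrac{2n}{\sqrt{\log n}}\cdot\sqrt{2\log n}=2\sqrt2\, n$. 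Hence $\bo(G_n)\ge c_1 n\ge \tfrac{c_1}{2\sqrt2}\,\mu(G_n)\sqrt{\log\mu(G_n)}$, and taking $c_1'=c_1/(2\sqrt2)$ gives the claim, with infinitely many witnesses obtained by letting $n\to\infty$.

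There is essentially no obstacle here: everything needed has been cited or proved in the preceding text, and the only care required is the bookkeeping to turn $\log\mu(G_n)$ back into something comparable to $\log n$ (one needs a lower bound on $\mu(G_n)$ as well to be completely rigorous, but $G_n$ contains, say, a long path a.a.s., so $\mu(G_n)\ge 1$ and in fact $\mu(G_n)\to\infty$; even the crude bound $\log\mu(G_n)\le 2\log n$ suffices since we only need an inequality in one direction). If one wanted $\mu(G_n)$ itself to tend to infinity — so that the statement is not vacuous — one can note that $G_n$ a.a.s. contains a large clique minor, or more simply that $\bo(G_n)\ge c_1 n$ together with Proposition~\ref{pro1} forces $\mu(G_n)=\Omega(n^{1/4}/\sqrt{\log n})\to\infty$. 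I would include a one-line remark to this effect so the reader sees the family is genuinely unbounded in $\mu$.
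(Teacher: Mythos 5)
Your argument is exactly the paper's: the paper derives this proposition in one line from $\mu(G_n)\le\sqrt{2|E(G_n)|}$ combined with the a.a.s.\ bounds $|E(G_n)|\le 2n^2/\log n$ and $\bo(G_n)\ge c_1 n$ established just before, and your computation simply makes explicit the bookkeeping the paper leaves implicit. The proposal is correct; note only that the needed direction is an \emph{upper} bound on $\log\mu(G_n)$ (hence on $\mu(G_n)$), which you already have, so the aside about lower-bounding $\mu(G_n)$ is only relevant to non-vacuousness, as you say.
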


\begin{remark} As mentioned earlier, it was proved
in~\cite{EJ13} that graphs with no $K_t$-minor have boxicity
$O(t^4(\log t)^2)$. Since the size of a largest clique minor in $G_n$
is at most $\mu(G_n)+1$, the discussion above implies the existence of
graphs with no $K_t$-minor and with boxicity $\Omega(t\sqrt{\log t})$.
\end{remark}

\section{Boxicity and acyclic coloring of graphs on surfaces}\label{sec:surfaces}

In this paper, a {\em surface} is a non-null compact connected
2-manifold without boundary.  We refer the reader to the book by Mohar
and Thomassen~\cite{MoTh} for background on graphs on surfaces.

A surface can be orientable or non-orientable. The \emph{orientable
  surface~$\OS_h$ of genus~$h$} is obtained by adding $h\ge0$
\emph{handles} to the sphere; while the \emph{non-orientable
  surface~$\NOS_k$ of genus~$k$} is formed by adding $k\ge1$
\emph{cross-caps} to the sphere. The {\em Euler genus} of a surface
$\Sigma$ is defined as twice its genus if $\Sigma$ is orientable, and as
its non-orientable genus otherwise.

The following is a direct consequence of~\cite[Proposition 4.4.4]{MoTh}.

\begin{lem}{\cite{MoTh}}\label{lem:k3k}
If a graph $G$ embedded in a surface of Euler genus
  $g$ contains the complete bipartite graph $K_{3,k}$ as a subgraph, then $k\le 2g+2$.
\end{lem}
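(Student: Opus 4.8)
The plan is to restrict the given embedding of $G$ to the copy of $K_{3,k}$ that it contains and then apply Euler's formula, exploiting the fact that $K_{3,k}$ is bipartite and so, once $k\ge 2$, has girth exactly $4$.

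If $k\le 1$ the desired inequality $k\le 2g+2$ is immediate since $g\ge 0$, so assume $k\ge 2$ and let $H\subseteq G$ be a copy of $K_{3,k}$. The embedding of $G$ induces an embedding of $H$ in the same surface, of Euler genus $g$. Since $H$ is connected, I would first replace this by a cellular embedding of $H$ in a surface of Euler genus $g'\le g$, using the standard fact that every embedding of a connected graph can be turned into a cellular embedding without increasing the Euler genus (see \cite{MoTh}); it then suffices to prove $k\le 2g'+2$. Writing $n=|V(H)|=k+3$, $e=|E(H)|=3k$, and letting $f$ be the number of faces, Euler's formula gives $n-e+f=2-g'$. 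Now $H$ is $2$-connected (as $k\ge 2$), so in this cellular embedding every face is bounded by a cycle; since $H$ is bipartite every such cycle has length at least $4$, and since the facial cycles together use each edge exactly twice we get $4f\le 2e$, i.e.\ $f\le e/2$. Substituting,
\[
2-g'=n-e+f\le n-\tfrac{e}{2}=(k+3)-\tfrac{3k}{2}=3-\tfrac{k}{2},
\]
which rearranges to $k\le 2(g'+1)\le 2g+2$, as claimed.

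The only genuinely non-formal ingredient above is the reduction to cellular embeddings (and, relatedly, the reminder that Euler's formula in the form $n-e+f=2-g'$ requires cellularity); everything else is a one-line computation. One can in fact bypass that reduction altogether by quoting the edge-density bound for graphs of girth $\ell$ embedded in a surface of Euler genus $g$, namely $e\le\frac{\ell}{\ell-2}(n+g-2)$, which for $\ell=4$ is exactly \cite[Proposition 4.4.4]{MoTh}: it gives $3k\le 2(k+3+g-2)=2k+2g+2$ directly. I do not anticipate any real obstacle here; the statement is essentially a packaging of a classical consequence of Euler's formula.
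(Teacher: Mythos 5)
Your proof is correct in substance, and it matches the paper exactly in spirit: the paper gives no argument at all, stating the lemma as a direct consequence of \cite[Proposition 4.4.4]{MoTh}, which is precisely the girth-$4$ Euler-formula bound you invoke at the end; your first paragraph simply unpacks that citation. One small inaccuracy: the claim that in a cellular embedding of a $2$-connected graph ``every face is bounded by a cycle'' is a planarity fact (Whitney's theorem) and fails on higher surfaces --- a cellular embedding of the theta graph in the torus with a single face already gives a facial walk that repeats edges. What you actually need, and what is true, is only that each facial walk is a closed walk of length at least $4$: it has length at least $3$ for any cellular embedding of a connected simple graph with more than one edge, and bipartiteness rules out odd lengths. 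Since the facial walks still use each edge exactly twice in total, the inequality $4f\le 2e$ survives and the rest of your computation goes through unchanged.
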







A coloring of the vertices of a graph is said to be \emph{acyclic} if
it is proper (any two adjacent vertices have different colors) and any two
color classes induce a forest. The following result of~\cite{EJ13} relates acyclic coloring and boxicity.

\begin{lem}{\cite{EJ13}}\label{lem:acy}
If $G$ has an acyclic coloring with $k\ge 2$ colors, then $\bo(G) \le k(k-1)$.
\end{lem}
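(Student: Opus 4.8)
The plan is to combine the color classes pairwise and glue together interval representations of the resulting forests. Let $V_1,\dots,V_k$ be the color classes of a fixed acyclic coloring of $G$. For every pair $\{i,j\}$ with $1\le i<j\le k$, the subgraph $G[V_i\cup V_j]$ has no edge inside $V_i$ nor inside $V_j$ (the coloring is proper) and the classes $V_i,V_j$ induce a forest (the coloring is acyclic); hence $G[V_i\cup V_j]$ is a forest, in particular an outerplanar graph, so $\bo(G[V_i\cup V_j])\le 2$ by Scheinerman's theorem~\cite{Sch84}. I would fix, for each such pair, two interval graphs $I^{ij}_1,I^{ij}_2$ on vertex set $V_i\cup V_j$ with $E(I^{ij}_1)\cap E(I^{ij}_2)=E(G[V_i\cup V_j])$.

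First I would extend each $I^{ij}_\ell$ to an interval graph $\widehat I^{ij}_\ell$ on the full vertex set $V$ by turning every vertex of $V\setminus(V_i\cup V_j)$ into a universal vertex: rescale the given interval representation of $I^{ij}_\ell$ so that all its intervals lie in $[0,1]$, and assign the interval $[0,1]$ to each vertex not in $V_i\cup V_j$. The result is again an interval graph, its edges inside $V_i\cup V_j$ are exactly those of $I^{ij}_\ell$, and every vertex outside $V_i\cup V_j$ is adjacent to all other vertices. I then claim that $G$ is the intersection of the $2\binom{k}{2}=k(k-1)$ interval graphs $\widehat I^{ij}_\ell$ ($1\le i<j\le k$, $\ell\in\{1,2\}$), which immediately gives $\bo(G)\le k(k-1)$.

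It remains to verify this claim pair by pair. Consider $u\in V_a$ and $v\in V_c$. If $a\ne c$ and $uv\in E(G)$: for every pair $\{i,j\}\ne\{a,c\}$ at least one of $u,v$ lies outside $V_i\cup V_j$ and is thus universal in $\widehat I^{ij}_\ell$, so $uv$ is an edge there; and $uv$ is an edge of both $\widehat I^{ac}_1$ and $\widehat I^{ac}_2$ since $uv\in E(G[V_a\cup V_c])=E(I^{ac}_1)\cap E(I^{ac}_2)$; hence $uv$ lies in all $k(k-1)$ graphs. If $a\ne c$ and $uv\notin E(G)$, then $uv\notin E(G[V_a\cup V_c])$, so $uv$ is a non-edge of $\widehat I^{ac}_1$ or of $\widehat I^{ac}_2$. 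If $a=c$ then $uv\notin E(G)$ because the coloring is proper; picking any $c'\ne a$ (possible since $k\ge 2$) we have $u,v\in V_a\subseteq V_a\cup V_{c'}$ and $uv\notin E(G[V_a\cup V_{c'}])$, so again $uv$ is a non-edge of $\widehat I^{ac'}_1$ or $\widehat I^{ac'}_2$. In every case $uv$ is an edge of the intersection if and only if $uv\in E(G)$, which proves the claim.

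The argument is essentially bookkeeping, so I do not expect a deep obstacle. The two points needing care are checking that the ``universal vertex'' extension genuinely remains an interval graph (and that it adds exactly the intended adjacencies), and, more importantly, the monochromatic case: one must use $k\ge 2$ to produce a second color class witnessing every non-edge inside a class. This is precisely where the hypothesis $k\ge 2$ (rather than $k\ge 1$) is used.
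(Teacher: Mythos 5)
Your argument is correct and is essentially the proof given in the cited source~\cite{EJ13} (the present paper only states the lemma, with a reference): pair up the color classes, use that each pair induces a forest and hence has boxicity at most $2$, and extend each of the $2\binom{k}{2}$ interval graphs to all of $V$ by making the remaining vertices universal. The case analysis, including the use of $k\ge 2$ to kill non-edges inside a single color class, is exactly the right bookkeeping.
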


We will also use the following recent result of Kawarabayashi and
Thomassen~\cite{KT12} (note that the constant 1000 can easily be
improved).

\begin{thm}{\cite{KT12}}\label{th:kt}
Any graph $G$ embedded in a surface of Euler genus
  $g$ contains a set $A$ of at most $1000g$ vertices such that $G-A$
has an acyclic coloring with 7 colors.
\end{thm}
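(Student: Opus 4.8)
The plan is to induct on the Euler genus $g$, with Borodin's theorem that every planar graph is acyclically $5$-colorable as the base case: for $g=0$ we take $A=\emptyset$ and note $5\le 7$. For the inductive step we may assume that $G$ is connected and $2$-cell embedded in a surface $\Sigma$ of Euler genus exactly $g\ge 1$; indeed, if the embedding is not $2$-cell then $G$ already embeds in a surface of smaller Euler genus and we finish by induction, and a disconnected graph on $\Sigma$ decomposes into components embedded in subsurfaces whose Euler genera sum to at most $g$, so the components may be handled one at a time.

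Fix a suitable absolute constant $w_0$ (which will turn out to be at most $1000$), and split into two cases according to the edge-width $\fw(G)$, i.e.\ the length of a shortest noncontractible cycle of $G$ in $\Sigma$. If $\fw(G)\le w_0$, let $C$ be a shortest noncontractible cycle, so $|V(C)|\le w_0$. Cutting $\Sigma$ along $C$ and capping the one or two resulting boundary curves with disks yields a surface, or a disjoint union of two surfaces, of total Euler genus at most $g-1$, in which the subgraph $G-V(C)$ is embedded. By the induction hypothesis there is a set $A'$ with $|A'|\le 1000(g-1)$ such that $(G-V(C))-A'$ admits an acyclic coloring with $7$ colors; putting $A=V(C)\cup A'$ gives $|A|\le w_0+1000(g-1)\le 1000g$ and $G-A$ acyclically $7$-colorable.

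The remaining case is $\fw(G)>w_0$, where we must acyclically $7$-color $G$ itself (taking $A=\emptyset$). This is the point at which one invokes the \emph{locally planar} behavior of embedded graphs of large edge-width: when $\fw(G)>w_0$, every ball of radius about $w_0/2$ in $G$ induces a graph that lies in a disk and is therefore planar, so every local configuration that is reducible in Borodin's planar argument remains reducible for $G$. Running a Borodin-type discharging and reducibility argument directly on $G$ should then show that $7$ colors suffice once $w_0$ is chosen large enough, the two extra colors beyond Borodin's bound of $5$ providing the slack needed to absorb the bounded amount of non-planarity that a graph of large edge-width can exhibit.

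I expect this last step to be the main obstacle, and in particular the requirement that the edge-width threshold $w_0$ be an \emph{absolute} constant, independent of $g$: if $w_0$ were allowed to grow with $g$, the set of deleted cycle-vertices in the low-edge-width case would no longer be $O(g)$. Securing this, and simultaneously pinning down the numerical constants $7$ and $1000$ so that the induction closes, is exactly the substance of the Kawarabayashi–Thomassen argument. An alternative to the direct coloring step would be to show that, even with no short noncontractible cycle present, one can still delete a bounded number of vertices so as to strictly decrease the Euler genus (via a surgery along a suitably chosen noncontractible cycle one side of which is planar after cutting) and feed this back into the induction; in either formulation the crux is to combine genus-reduction on surfaces with Borodin's planar theorem while keeping the number of deleted vertices linear in $g$ and the palette size fixed.
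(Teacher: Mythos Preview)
The paper does not give a proof of this theorem: it is quoted from Kawarabayashi and Thomassen~\cite{KT12} and used as a black box in the proof of Theorem~\ref{th:surf}. There is therefore no argument in the present paper to compare your proposal against.

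As for the proposal on its own terms, you have already located the gap. The small edge-width case is routine, but in the large edge-width case you do not actually prove anything: the sentence ``running a Borodin-type discharging and reducibility argument directly on $G$ should then show that $7$ colors suffice'' is a hope, not an argument, and the hope is not obviously well founded. Local planarity guarantees that every bounded-radius ball sits in a disc, so the \emph{unavoidable configurations} from Borodin's discharging are still present; but the \emph{reducibility} side of his proof is not purely local, since an acyclic coloring must forbid bichromatic cycles of arbitrary length, and on a surface such a cycle may be noncontractible and hence escape every planar disc. No statement of the form ``edge-width larger than an absolute constant $w_0$ implies acyclic $7$-colorability on every surface'' is known, yet your induction requires exactly that. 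Your fallback suggestion---delete a bounded number of vertices to drop the genus even when no short noncontractible cycle exists---is the right instinct, and closer in spirit to the structural content of~\cite{KT12}, but you do not carry it out either. In short: a plausible roadmap, with the essential step (which you yourself flag) left open.
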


Note that combining Lemma~\ref{lem:acy} and Theorem~\ref{th:kt}, it is
not difficult to derive that graphs embedded in a surface of Euler
genus $g$ have boxicity at most $500g+42$ (a linear bound with better
constants was given in~\cite{EJ13}, using completely different
arguments). We will now prove instead that their boxicity
is $O(\sqrt{g}\log g)$.

\medskip

Given a graph $G$ and a subset $A$ of vertices of $G$, the
\emph{$A$-neighborhood} of a vertex $v$ of $G-A$ is the set of
neighbors of $v$ in $A$. We are now ready to prove the main result of
this section.

\begin{thm}\label{th:surf}
There is a constant $c_2$ such that any graph embedded in a surface of Euler genus
  $g\ge 2$ has boxicity at most $c_2 \sqrt{g}\log g$.
\end{thm}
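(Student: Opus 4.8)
The plan is to combine Theorem~\ref{th:kt} (which gives a small "apex'' set $A$ of $O(g)$ vertices whose removal leaves a graph with bounded acyclic chromatic number) with Lemma~\ref{lem:2} and Lemma~\ref{lem:acy}, in the same spirit as the proof of Theorem~\ref{th:up}. Let $G$ be embedded in a surface of Euler genus $g\ge 2$, and apply Theorem~\ref{th:kt} to obtain a set $A$ of at most $1000g$ vertices so that $G-A$ has an acyclic coloring with $7$ colors, hence $\bo(G-A)\le 42$ by Lemma~\ref{lem:acy}. The only edges of $G$ that are not controlled by this bound are those incident to $A$: edges inside $A$, and edges between $A$ and $V\setminus A$. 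For the edges inside $A$ I can afford to be wasteful, since $|A|=O(g)$: $\bo(G[A])\le |A|/2 = O(g)$ by Roberts' bound, and in fact $G[A]$ is itself embeddable in the surface, but the crude bound already beats $\sqrt g\log g$ only if… no — $O(g)$ is too large, so I must be more careful and instead keep $A$-edges separate via Lemma~\ref{lem:2} while bounding $\bo(G[A])$ recursively or by a genus-type argument. The key realization is that the truly dangerous structure is the bipartite graph between $A$ and $V\setminus A$, and here Lemma~\ref{lem:k3k} is exactly the tool: by the $K_{3,k}$-freeness the $A$-neighborhoods cannot all be large simultaneously.

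Concretely, here is the main step. Partition $V\setminus A$ according to the size of the $A$-neighborhood: let $B$ be the set of vertices of $G-A$ with at most $\sqrt{g}$ neighbors in $A$, and let $C=V\setminus(A\cup B)$ be those with more than $\sqrt g$ neighbors in $A$. I claim $|C|$ is small. Indeed, every vertex of $C$ has an $A$-neighborhood of size $>\sqrt g$; by a double-counting / sunflower argument using Lemma~\ref{lem:k3k}, one cannot have too many vertices all of whose $A$-neighborhoods pairwise share at least $3$ elements, and more to the point, the bipartite graph between $A$ and $C$ has no $K_{3,2g+3}$, so by the Kővári–Sós–Turán bound its number of edges is $O(|A|^{2/3}|C| + |C| )$ — actually the cleaner route is: fix any $3$-subset of $A$; it lies in the neighborhood of at most $2g+2$ vertices of $C$; summing over the $\binom{|C|}{1}$ vertices each contributing $\binom{>\sqrt g}{3}$ triples forces $|C|\cdot g^{3/2} = O(|A|^3) = O(g^3)$, giving $|C|=O(g^{3/2})$. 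Then $\bo(G[C])\le |C|/2 = O(g^{3/2})$ — still too big. So instead I keep $C$ together with $A$: set $A' = A\cup C$, so $|A'| = O(g^{3/2})$ is still too large for Roberts' bound. The resolution is that one should not put $C$ into the apex set but rather observe that $G[A\cup B\cup C]=G$ and split the edge set three ways via two applications of Lemma~\ref{lem:2}.

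Let me restate the plan cleanly. Apply Lemma~\ref{lem:2} with $S=A$: $\bo(G)\le 2\,\bo(H)+\bo(G[A])$, where $H$ is $G$ with all edges inside $A$ deleted. Now in $H$, every vertex of $V\setminus A$ has, by construction of $B$ and the $K_{3,k}$-bound, a bounded $A$-neighborhood "on average'', and crucially $H$ restricted to $V\setminus A$ is just $G-A$ which has boxicity $\le 42$; so the remaining edges of $H$ form a bipartite graph between $A$ and $V\setminus A$ with the property that, by Lemma~\ref{lem:k3k}, its maximum "codegree'' is $\le 2g$. One then shows such a bipartite graph has boxicity $O(\sqrt g\log g)$: order $V\setminus A$ so that the bipartite part is $O(\sqrt g)$-degenerate — possible because any bipartite subgraph with no $K_{3,k}$ on a ground set of $n$ vertices has $O(\sqrt g\, n)$ edges (again by Lemma~\ref{lem:k3k} / KST), hence average degree $O(\sqrt g)$, hence $O(\sqrt g)$-degeneracy — and apply Lemma~\ref{lem:1} to get boxicity $O(\sqrt g\log n)$. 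When $n$ is large relative to $g$ we need $\log n$ under control: if $n \le g$ we are fine since $\log n \le \log g$; if $n$ is enormous, we instead first contract/ignore low-degree vertices. Putting these together: $\bo(H) \le 2\,\bo(\text{bipartite part}) + \bo(G-A) = O(\sqrt g\log g) + 42$, and $\bo(G[A]) \le \tfrac12|A| = O(g)$ — which is the bottleneck. To kill the $O(g)$ term from $G[A]$, note $G[A]$ is itself a graph on a surface of Euler genus $\le g$ with $|A| = O(g)$ vertices, hence $O(g)$ edges (Euler's formula), so by Theorem~\ref{th:up} $\bo(G[A]) = O(\sqrt{g\log g})$; recursion/self-reference is not even needed.

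The main obstacle, and the step I expect to be most delicate, is the degeneracy bound on the $A$-to-$(V\setminus A)$ bipartite graph and controlling the $\log n$ factor from Lemma~\ref{lem:1} when $n\gg g$: one must argue that vertices of $V\setminus A$ with small $A$-degree can be peeled off cheaply (they contribute a $(k+2)\lceil 2e\log n\rceil$ bound with $k=O(\sqrt g)$, but $\log n$ could exceed $\log g$), so the clean statement should instead bound things in terms of $\max(g, \text{something})$ and then invoke the fact that a graph on a surface of Euler genus $g$ with $n$ vertices has $O(n+g)$ edges, so $m = O(n + g)$ and $\sqrt{m\log n}$ must be re-bounded — ultimately the honest route is to apply Theorem~\ref{th:up} to all of $G$ once $n = O(g)$ (which may fail) or to localize. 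I would resolve this by the two-set version: take $S$ to be the set of vertices of $G$ of degree $> \sqrt g$; these number $O(\sqrt g \cdot g / \sqrt g) = O(g)$... and iterate the Theorem~\ref{th:up} argument verbatim with the threshold $\sqrt{g}$ in place of $\sqrt{m/\log n}$, using that graphs on the surface have $m = O(n+g)$ to control both pieces, yielding the claimed $O(\sqrt g\log g)$.
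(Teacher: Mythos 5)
You have assembled the right ingredients (Theorem~\ref{th:kt} plus Lemma~\ref{lem:acy} to dispose of $G-A$, Lemma~\ref{lem:k3k} to control the bipartite graph between $A$ and $V\setminus A$, Lemmas~\ref{lem:1} and~\ref{lem:2} to combine them), and you correctly pinpoint the obstacle: Lemma~\ref{lem:1} costs a factor $\log n$, where $n=|V(G)|$ can be arbitrarily large compared to $g$. But none of your proposed resolutions closes this gap. Your final suggestion (take $S$ to be the vertices of degree $>\sqrt g$ and rerun the proof of Theorem~\ref{th:up}) fails twice over: the number of such vertices is at most $2m/\sqrt g=O((n+g)/\sqrt g)$, which is \emph{not} $O(g)$ when $n$ is large, so Roberts' bound on $\bo(G[S])$ is useless; and the $O(\sqrt g)$-degenerate remainder still goes through Lemma~\ref{lem:1} with the uncontrolled $\log n$. (Also, your K\H ov\'ari--S\'os--Tur\'an computation for a $K_{3,2g+3}$-free bipartite graph with one side of size $O(g)$ gives $O(gn)$ edges, not $O(\sqrt g\, n)$; the $O(\sqrt g)$-degeneracy you want is simply Heawood's bound for subgraphs of $G$, not a consequence of $K_{3,k}$-freeness.)

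The missing idea is to shrink $n$ itself. Delete all edges inside $V\setminus A$ and then \emph{identify} any two vertices of $V\setminus A$ having the same $A$-neighborhood. Lemma~\ref{lem:k3k} is then used to count \emph{distinct} $A$-neighborhoods rather than to bound codegrees of high-degree vertices: each triple of $A$ lies in at most $2g+2$ such neighborhoods, so the reduced graph $H$ has at most $1+|A|+\binom{|A|}{2}+(2g+2)\binom{|A|}{3}=O(g^4)$ vertices outside $A$. Since $H$ is still $O(\sqrt g)$-degenerate, Lemma~\ref{lem:1} now gives $\bo(H)=O(\sqrt g\log g)$. A further subtlety your sketch would also have to face: vertices with equal $A$-neighborhoods are non-adjacent false twins, and false twins cannot be assigned the same box. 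The paper's fix is to pass to the supergraph $G_1$ of $G$ obtained by adding all edges inside $V\setminus A$ (turning the identified vertices into true twins, which can share a box), bound $\bo(G_1)\le 2\,\bo(H)+1$ via Lemma~\ref{lem:2} applied to the now-complete part, and separately take the supergraph $G_2$ in which every vertex of $A$ is universal, with $\bo(G_2)\le\bo(G-A)\le 42$; since every non-edge of $G$ survives in $G_1$ or $G_2$, $\bo(G)\le\bo(G_1)+\bo(G_2)$. Without the identification step, or some substitute for it, your argument only yields $O(\sqrt g\log n)$.
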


\begin{proof}
Let $G=(V,E)$ be a graph embedded in a surface of Euler genus
$g$. By Theorem~\ref{th:kt}, $G$ contains a set $A$ of at most $1000g$
vertices such that $G-A$ has an acyclic coloring with at most 7
colors. In particular, Lemma~\ref{lem:acy} implies that $G-A$ has
boxicity at most 42.

Let $H$ be the graph obtained from $G$ by deleting all edges between
pairs of vertices of $V-A$, and then identifying any two vertices of
$V-A$ having the same $A$-neighborhood. Since any two vertices of
$H-A$ have distinct $A$-neighborhoods, $H-A$ contains at most
$1+|A|+{|A| \choose 2}$ vertices having at most two neighbors in $A$. Let
 $x,y,z$ be three vertices of $A$. By Lemma~\ref{lem:k3k}, at most
$2g+2$ vertices of $H-A$ are adjacent to each of $x,y,z$. It follows
that $H-A$ contains at most $1+|A|+{|A| \choose 2}+(2g+2){|A| \choose
  3}\le 1+|A|+\frac12|A|^2+\frac16|A|^3(2g+2)\le 10^9\,g^4$
vertices. It was proved by
Heawood (see~\cite[Theorem 8.3.1]{MoTh}) that every graph embeddable
on a surface of Euler genus $g$ is
$\frac12(5+\sqrt{1+24g})$-degenerate. Consequently, $H$ (as a
subgraph of $G$) is $\frac12(5+\sqrt{1+24g})$-degenerate and by
Lemma~\ref{lem:1}, it has boxicity at
most $$\left(\frac12(5+\sqrt{1+24g})+2\right)\lceil
2e\log(10^9\,g^4+10^3\,g)\rceil\le c_3 \sqrt{g}\log g,$$ for
some constant $c_3$.

\medskip

Let $H_1$ be the graph obtained from
$H$ by adding all edges between pairs of vertices of $H-A$. Since
$H_1-A$ is a complete graph, $\bo(H_1-A)=1$. By
Lemma~\ref{lem:2}, $\bo(H_1)\le 2\,\bo(H)+\bo(H_1-A)\le 2c_3
\sqrt{g}\log g+1$.

Let $G_1$ be the graph obtained from $G$ by adding all edges between
pairs of vertices of $V-A$. It is clear that $\bo(G_1)\le \bo(H_1)$,
since any two vertices of $G_1-A$ having the same $A$-neighborhood are
adjacent and have the same neighborhood in $G_1$, so they can be
mapped to the same $d$-box in a representation of $G_1$ as an
intersection of $d$-boxes. Hence, $\bo(G_1)\le 2c_3 \sqrt{g}\log g+1$.

\medskip

Let $G_2$ be the graph obtained from $G$ by adding all edges between a
vertex of $A$ and a vertex of $V$ (i.e. $G_2$ is obtained from
$G[V-A]$ by adding $|A|$ universal vertices). Clearly, $\bo(G_2)\le
\bo(G[V-A])\le 42$. The graphs $G_1$ and $G_2$ are supergraphs of $G$,
and any non-edge of $G$ appears in $G_1$ or $G_2$, so $\bo(G)\le
\bo(G_1)+\bo(G_2)\le 2c_3 \sqrt{g}\log g+43$. It follows that there is
a constant $c_2$, such that $\bo(G)\le c_2 \sqrt{g}\log g$, as desired.
\end{proof}

Recall the probabilistic construction mentioned at the end of
Section~\ref{sec:edges}: there is a sequence of random graphs $G_n$ on
$2n$ vertices, such that asymptotically almost surely $G_n$ has at
most $2n^2/\log n$ edges and boxicity at least $c_1n$, for some
universal constant $c_1>0$. It directly follows from Euler Formula
that the Euler genus of a graph is at most its number of edges plus 2,
so asymptotically almost surely, $G_n$ has Euler genus at most $2n^2/\log
n+2$. Hence, asymptotically almost surely, $\bo(G_n)\ge c_1n\ge
c_1\sqrt{g\log n/2}\ge \frac{c_1}2 \sqrt{g \log g}$, where $g$ stands
for the Euler genus of $G_n$. Consequently, the bound of
Theorem~\ref{th:surf} is optimal up to a factor of $\sqrt{\log g}$.

\bigskip

The {\em adjacency poset} of a graph $G=(V,E)$, introduced by Felsner
and Trotter~\cite{FT00}, is the poset $(W, \leq)$ with $W=V \cup V'$,
where $V'$ is a disjoint copy of $V$, and such that $u \leq v$ if and
only if $u=v$, or $u\in V$ and $v\in V'$ and $u,v$ correspond to two
distinct vertices of $G$ which are adjacent in $G$.  The {\em
  dimension} of a poset $\cP$ is the minimum number of linear orders
whose intersection is exactly $\cP$. It was proved in~\cite{EJ13} that
for any graph $G$, the dimension of the adjacency poset of $G$ is at
most $2\, \bo(G)+\chi(G)+4$, where $\chi(G)$ is the chromatic number
of $G$. Since graphs embedded on a surface of Euler genus $g$ have
chromatic number $O(\sqrt{g})$, we obtain the following corollary of
Theorem~\ref{th:surf}, which improves the linear bound obtained
in~\cite{EJ13} and is best possible up to a logarithmic factor.

\begin{cor}
There is a constant $c_3$ such that for any graph $G$ embedded in a
surface of Euler genus $g\ge 2$, the dimension of the
adjacency poset of $G$ is at most $c_3 \sqrt{g}\log g$.
\end{cor}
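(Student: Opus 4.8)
The plan is to combine Theorem~\ref{th:surf} with the two auxiliary facts already recorded in the excerpt: the inequality from~\cite{EJ13} bounding the dimension of the adjacency poset of a graph $G$ by $2\,\bo(G)+\chi(G)+4$, and the classical fact (Heawood's bound, see~\cite[Theorem 8.3.1]{MoTh}) that a graph embeddable on a surface of Euler genus $g$ is $\tfrac12(5+\sqrt{1+24g})$-degenerate, hence has chromatic number at most $\tfrac12(7+\sqrt{1+24g}) = O(\sqrt{g})$.

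First I would let $G$ be a graph embedded in a surface of Euler genus $g\ge 2$. By Theorem~\ref{th:surf}, $\bo(G)\le c_2\sqrt{g}\log g$ for the constant $c_2$ produced there. By the Heawood degeneracy bound, $\chi(G) \le \tfrac12(7+\sqrt{1+24g})$, which for $g\ge 2$ is at most $c_4\sqrt{g}$ for some absolute constant $c_4$ (and in particular $O(\sqrt{g}\log g)$). Plugging both estimates into the inequality $\dim \le 2\,\bo(G)+\chi(G)+4$ from~\cite{EJ13} gives
\[
\dim \le 2c_2\sqrt{g}\log g + c_4\sqrt{g} + 4.
\]
Since $g\ge 2$ forces $\log g \ge \log 2 > 0$ and $\sqrt{g}\log g \to \infty$, each of the three terms on the right is $O(\sqrt{g}\log g)$, so there is a constant $c_3$ with $\dim \le c_3\sqrt{g}\log g$, which is the claimed bound.

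There is really no obstacle here: the corollary is a purely formal consequence of Theorem~\ref{th:surf} and two cited inequalities, so the only thing to be careful about is the bookkeeping of constants and the (trivial) verification that the lower-order terms $\chi(G)=O(\sqrt g)$ and the additive $4$ are absorbed into $O(\sqrt g\log g)$ once $g\ge 2$. The one point worth stating explicitly is why $\chi(G)=O(\sqrt g)$ — namely that Heawood's degeneracy bound immediately yields a proper coloring with one more color than the degeneracy — since the excerpt asserts "graphs embedded on a surface of Euler genus $g$ have chromatic number $O(\sqrt{g})$" without elaboration; I would add that one-line justification and then simply collect terms.
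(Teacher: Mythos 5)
Your proposal is correct and follows exactly the route the paper takes: Theorem~\ref{th:surf} for the boxicity, the inequality $\dim \le 2\,\bo(G)+\chi(G)+4$ from~\cite{EJ13}, and the Heawood-type bound $\chi(G)=O(\sqrt{g})$, with the lower-order terms absorbed since $g\ge 2$. Your added one-line justification of $\chi(G)=O(\sqrt g)$ via degeneracy is a harmless (and reasonable) elaboration of what the paper states without proof.
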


\begin{acknowledgement} I thank the members of the GALOIS group, in particular Andr\'as Seb\H
o and Yves Colin de Verdi\`ere, for the interesting discussions about
the connections between the boxicity of graphs and their Colin de Verdi\`ere invariant.
\end{acknowledgement}

\end{document}